\author{Ayan Nath}
\address{Kaliabor College, Kuwaritol, Assam, India}
\email{ayannath7744@gmail.com}
\author{Abhishek Jha}
\address{Indraprastha Institute of Information Technology, New Delhi, India}
\email{abhishek20553@iiitd.ac.in}
\subjclass[2010]{Primary: 11N32, Secondary: 11A41}
\keywords{Polynomial; Primes; Least Common Multiple; Greatest Prime Divisor.}
\title[On the L.C.M. of Polynomial Sequences at Prime Arguments]
{On the Least Common Multiple of Polynomial Sequences at Prime Arguments}
\theoremstyle{theorem}
\newtheorem{theorem}{Theorem}[section]
\newtheorem{lemma}[theorem]{Lemma}
\newtheorem{corollary}[theorem]{Corollary}
\newtheorem{question}[theorem]{Question}
\newtheorem{proposition}[theorem]{Proposition}
\newtheorem{remark}[theorem]{Remark}
\newcommand{\li}{\operatorname{Li}}
\newcommand{\e}{\varepsilon}
\newcommand{\lcm}{\operatorname{lcm}}
\newcommand{\disc}{\operatorname{disc}}
\newcommand{\Mod}[1]{\ (\mathrm{mod}\ #1)}
\newcommand{\rad}{\operatorname{rad}}
\newcommand{\logx}{\log x}
\newcommand{\von}{\Lambda}
\newcommand{\ZZ}{\mathbb Z}
\newcommand{\xx}{x_{\mathfrak b}}
\newcommand{\xdelta}{x^{\delta}}
\renewcommand{\O}{\mathcal O}
\begin{document}

\maketitle

\begin{abstract}
    Cilleruelo conjectured that if $f\in\ZZ[x]$ is an irreducible polynomial of degree $d\ge 2$ then, $\log \lcm \{f(n)\mid n<x\} \sim (d-1)x\log x.$ In this article, we investigate the analogue of prime arguments, namely, $\lcm \{f(p)\mid p<x\},$ where $p$ denotes a prime and obtain non-trivial lower bounds on it. Further, we also show some results regarding the greatest prime divisor of $f(p).$
\end{abstract}

\section{Introduction}
For a polynomial $f\in \ZZ[x],$ define $L_f(x)=\lcm\{f(n)\mid n < x \text{ and }f(n)\ne 0\},$ where the lcm of an empty set is taken to be $1.$ The Prime Number Theorem is equivalent to 
$$\log \lcm\{1,2,\ldots,n\}\sim n.$$
Therefore, we expect similar rate of growth for the case when $f$ is a product of linear polynomials; see the article by Hong, Qian, and Tan \cite{hong} for a thorough analysis of this case.
However, the growth is not the same for higher degree polynomials.
Cilleruelo in \cite{cill} conjectured that $\log L_f(x) \sim (d-1)x\log x$ for irreducible polynomials $f$ of degree $d\ge 2$ and proved it for $d=2$. For some time, $\log L_f(x) \gg x$ proven by Hong, Luo, Qian, and Wang in \cite{hong:lower-bound}, for polynomials with non-negative integer coefficients, was the strongest bound known.
Recently, the conjectured order of growth was obtained by Maynard and Rudnick in \cite{maynard} and the bound was improved to $x\log x$ by Sah in \cite{sah}. For a thorough survey on the least common multiple of polynomial sequences, see \cite{sanna}.

In this article, we study the analogous problem at prime arguments.
From the Prime Number Theorem, we know that
$$\log \lcm\{p\mid p < x\} \sim x.$$
This motivates us to consider 
$\lcm\{f(p) \mid p < x\}$ for an arbitrary polynomial $f \in \ZZ[x].$ For simplicity, we will only consider irreducible polynomials $f.$

\begin{theorem}\label{main-theorem}
    Let $f\in\ZZ[x]$ be an irreducible polynomial of degree $d$. Then,
    $$\log \lcm \{f(p)\mid p<x\}\gg x^{1-\e(d)},$$
    where $\e(1)=0.3735,~ \e(2)=0.153$ and $\e(d)=\exp\left(\frac{-d-0.9788}{2}\right)$ for $d\ge 3.$
\end{theorem}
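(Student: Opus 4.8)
The plan is to deduce Theorem~\ref{main-theorem} from a statement about the greatest prime factor $P^+(f(p))$, which we in turn obtain by weighing the total size of $\prod_{p<x}f(p)$ against the part of it supported on small primes. Write $L=\lcm\{f(p)\mid p<x\}$, so that $\log L=\sum_{q^{k}}\log q$ with $q^{k}$ running over the prime powers dividing some $f(p)$, $p<x$; in particular $\log L\ge\sum_{q}\log q$ over primes $q$ above any chosen threshold that divide some $f(p)$. I would show that a positive proportion of the primes $p<x$ satisfy $P^+(f(p))>x^{1-\e(d)}$. Granting this, split $(x^{1-\e(d)},x^{d}]$ into $O(\logx)$ dyadic ranges; by the pigeonhole principle some range $(Q,2Q]$ with $Q\ge x^{1-\e(d)}$ contains $P^+(f(p))$ for $\gg\pi(x)/\logx$ of these $p$. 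Since $f$ has at most $d$ roots modulo any prime, each $q\in(Q,2Q]$ divides $f(p)$ for $\ll x/Q$ primes $p<x$, so the number of \emph{distinct} such $q$ is $\gg Q\pi(x)/(x\logx)$, each contributing $\ge\log Q\gg\logx$ to $\log L$; running the same argument with the threshold taken a fixed power of $\logx$ above $x^{1-\e(d)}$ to absorb the logarithmic losses then gives $\log L\gg x^{1-\e(d)}$.

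For the key input I would start from $\sum_{p<x}\log|f(p)|=\sum_{p<x}(d\log p+O(1))=dx+o(x)$, by the prime number theorem. Factor $f(p)=f(p)_{\le y}\,f(p)_{>y}$ into its $y$-smooth and $y$-rough parts with $y=x^{1-\e(d)}$; then $\log f(p)_{>y}\le d\logx+O(1)$, with $f(p)_{>y}=1$ exactly when $P^+(f(p))\le y$. Hence it suffices to establish
$$\Sigma_{\le y}\ :=\ \sum_{p<x}\log f(p)_{\le y}\ =\ \sum_{q\le y}\ \sum_{k\ge1}\#\{p<x:q^{k}\mid f(p)\}\,\log q\ \le\ (d-c)\,x+o(x)$$
for some fixed $c>0$: for then $\sum_{p<x}\log f(p)_{>y}\ge cx+o(x)$, and since this sum is at most $(d\logx)\,\#\{p<x:P^+(f(p))>y\}$, we obtain $\#\{p<x:P^+(f(p))>y\}\ge(c/d)\pi(x)(1+o(1))$, which is what the first paragraph needs.

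To bound $\Sigma_{\le y}$, first note that the prime powers with $k\ge2$, and the finitely many primes dividing $\disc(f)$ and the leading coefficient of $f$, contribute only $o(x)$ (use Brun--Titchmarsh on the residue classes coprime to $q$; the class $p\equiv0\Mod q$ contains at most one prime). For the main term $\sum_{q\le y}\#\{p<x:q\mid f(p)\}\log q$, bound the inner count by $\rho_f(q)\max_{r}\pi(x;q,r)$, where $\rho_f(q)=\#\{r\bmod q:f(r)\equiv0\}$, and split the range of $q$ at a level $x^{\sigma}$: for $q\le x^{\sigma}$ one uses Bombieri--Vinogradov (together with the refinements that push $\sigma$ slightly past $1/2$) to replace the count by $\rho_f(q)\pi(x)/\phi(q)$ with the correct constant; for $x^{\sigma}<q\le y$ one uses Brun--Titchmarsh in the form $\#\{p<x:q\mid f(p)\}\ll\rho_f(q)\,x/(\phi(q)\log(x/q))$. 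Because $\sum_{q\le x^{t}}\rho_f(q)(\log q)/q\sim t\logx$ (the prime ideal theorem for $\mathbb{Q}[x]/(f)$, roots of $f$ modulo $q$ corresponding to degree-one primes), partial summation yields $\Sigma_{\le y}\le\bigl(\sigma+2\log\tfrac{1-\sigma}{\e(d)}\bigr)x+o(x)$; imposing that this be $\le(d-c)x$ and optimizing over the attainable level $\sigma$ gives precisely $\e(d)=\exp(-(d+0.9788)/2)$ for $d\ge3$, and, carried through with the appropriate numerology, the value $\e(1)=0.3735$ for $d=1$.

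The case $d=2$ is instead handled by a Cilleruelo-type argument specific to quadratics, where $q\mid f(p)$ confines $p$ to one of two explicit residue classes arising from ideal factorizations in $\mathbb{Q}[x]/(f)$, producing the stronger exponent $\e(2)=0.153$; the case $d=1$ additionally invokes sieve estimates for shifted primes free of large prime factors. In all cases the main obstacle is the estimate for $\Sigma_{\le y}$ above --- equivalently, controlling the number of $p<x$ for which $f(p)$ has no prime factor larger than $x^{1-\e(d)}$. Treating every prime $q\le y$ by plain Brun--Titchmarsh (constant $2$ throughout) only gives $\e(d)\approx e^{-d/2}$; securing the sharp constant needs the \emph{true} mean value of $\pi(x;q,r)$ in the Bombieri--Vinogradov range, plus pushing that range a little past $x^{1/2}$, and this is exactly where the difficulty --- and the constant $0.9788$ --- resides. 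The reduction of the first paragraph, and the disposal of the higher prime powers and ramified primes, are routine.
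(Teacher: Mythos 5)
Your skeleton is essentially the paper's: compare $\sum_{p<x}\log|f(p)|=dx+o(x)$ against the mass supported on primes $q\le y=x^{1-\e(d)}$, estimate that mass by Bombieri--Vinogradov up to $x^{1/2}$ and Brun--Titchmarsh-type bounds in $[x^{1/2},y]$ (with prime powers and ramified primes contributing $o(x)$), conclude that primes $q>y$ carry $\gg x$ of the logarithmic mass, and convert this into the lcm bound using the fact that each such $q$ divides $f(p)$ for $\ll x/q$ primes $p<x$. (Your conversion via the positive-proportion statement and a dyadic pigeonhole loses a factor of $\log x$, which you can indeed absorb because the numerical constants have fixed slack; the paper avoids this detour by noting the exponent of each prime $q\ge x^{\delta}$ in $\prod_{p<x}f(p)$ is $\O(x^{1-\delta})$ and dividing, which is cleaner.) The genuine gap is in the numerics: the exponents in the statement are not delivered by the tools you invoke. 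With the Montgomery--Vaughan constant $2/(1-\theta)$ for all $\theta\in(\tfrac12,1-\e)$, your own inequality $\sigma+2\log\frac{1-\sigma}{\e(d)}<d$ at $\sigma=\tfrac12$ gives only $\e(d)=\exp\bigl(\tfrac{-d-0.886}{2}\bigr)$, not $\exp\bigl(\tfrac{-d-0.9788}{2}\bigr)$. The missing $0.0925$ in the exponent is exactly $\int_{1/2}^{2/3}\bigl(\frac{2}{1-\theta}-\frac{8}{6-7\theta}\bigr)\,\mathrm{d}\theta$, i.e.\ it comes from Iwaniec's improved Brun--Titchmarsh constant $8/(6-7\theta)$ on $\theta\in[\tfrac12,\tfrac23]$, which you never use. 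Attributing the gain instead to ``pushing the Bombieri--Vinogradov range a little past $x^{1/2}$'' is not viable: no unconditional level of distribution beyond $\tfrac12$ (in the maximal form needed here) is known --- that is Elliott--Halberstam territory, which the paper only mentions as a conditional remark.

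The same problem affects the low-degree cases. For $d=1$ the value $0.3735$ again comes from Iwaniec's theorem ($\tfrac12\ge 1.0472-\tfrac87\log(6-7\delta)$ forcing $\delta\le 0.6265$), not from unspecified ``sieve estimates for shifted primes free of large prime factors''; your plain-constant formula would give roughly $\e(1)\approx 0.389$. For $d=2$ the exponent $0.847$ is not reachable by a Cilleruelo-type residue-class/ideal-factorization argument: it requires the quadratic Brun--Titchmarsh theorem of Wu and Xi, which bounds $\#\{p<x: q\mid f(p)\}$ with explicit constants for moduli as large as $x^{16/17}$, valid for all $q\in[L,2L]$ outside an exceptional set of size $\O_A(L/(\log L)^A)$ (whose contribution must be, and easily is, shown to be $o(x)$); the numerical verification $\int_{1/2}^{0.847}C(\theta)\,\mathrm{d}\theta<\tfrac32$ then gives $\e(2)=0.153$. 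So your argument, as written, proves a theorem of the same shape but with strictly weaker exponents; to reach the stated values you must import Iwaniec's Brun--Titchmarsh theorem for $d=1$ and $d\ge 3$, and the Wu--Xi quadratic Brun--Titchmarsh theorem for $d=2$, and redo the integral computation with their constants.
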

We remark that $\log \lcm\{f(p)\mid p<x\}\le (d+o(1))x\ll x$ follows from the Prime Number Theorem.  

There is a lot of literature on the subject of largest prime divisor of $p+a$ for some fixed integer $a.$ Goldfeld in \cite{goldfeld} showed that there is a positive proportion of primes $p$ such that $p+a$ has a prime divisor greater than $p^{\delta}$ for $\delta=0.5.$ The strongest known result in this regard is $\delta=0.677$ proven by Baker and Harman in \cite[Theorem 8.3]{prime-detecting-sieves}, an improvement of $\delta=0.6687$ obtained by Fouvry in \cite{fouvry}. Luca in \cite{luca:p} obtained lower bounds on the proportion of such primes $p$ for $\delta \in[\frac 14,\frac 12].$ Similar work is also done for quadratic polynomials. Wu and Xi in \cite{uwu} proved that there exist infinitely many primes $p$ such that $p^2+1$ has a prime divisor greater than $p^{0.847}$ by virtue of the Quadratic Brun-Titchmarsh theorem (see Theorem \ref{qbrun}) developed by the authors.

We obtain a result of a similar flavor for general polynomials which we state as follows.
\begin{theorem}\label{density-result}
    Let $f\in\ZZ[x]$ be an irreducible polynomial of degree $d.$
    Then, there is a positive proportion of primes $p$ such that $f(p)$ has a prime divisor greater than $p^{1-\varepsilon(d)},$ where $\e(1)=0.3735,~ \e(2)=0.153$ and $\e(d)=\exp\left(\frac{-d-0.9788}{2}\right)$ for $d\ge 3.$
\end{theorem}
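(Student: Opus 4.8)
The plan is to compare two evaluations of $\sum_{p<x}\log|f(p)|$. On one side, since $\log|f(p)|=d\log p+O(1)$ for $p$ large, the Prime Number Theorem gives $\sum_{p<x}\log|f(p)|=(d+o(1))x$. On the other side, writing $\log|f(p)|=\sum_{q}(\log q)\,v_q(f(p))$ and splitting each value into its $z$-smooth and $z$-rough parts, with $z=x^{1-\varepsilon}$ and $\varepsilon=\varepsilon(d)$: if $p$ is \emph{not} in the set $\mathcal G(x)=\{p<x:\ f(p)\text{ has a prime factor}>p^{1-\varepsilon}\}$, then every prime factor of $f(p)$ is at most $p^{1-\varepsilon}\le z$, so the whole of $\log|f(p)|$ is carried by the primes $q\le z$. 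Using $\log|f(p)|\le d\log x+O(1)$ for the $p$ in $\mathcal G(x)$ and that $f(p)\ne0$ for all but finitely many $p$, this yields
$$(d+o(1))x\ \le\ |\mathcal G(x)|\,(d\log x+O(1))\ +\ \sum_{q\le z}(\log q)\sum_{p<x}v_q(f(p)),$$
so the whole game is to bound the last sum well below $dx$.

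To that end I would write $\sum_{p<x}v_q(f(p))=\sum_{k\ge1}\#\{p<x:\ q^k\mid f(p)\}$. The terms with $k\ge2$ contribute $O(x/\log x)$ and are discarded; for $k=1$ the primes $p<x$ with $q\mid f(p)$ lie in at most $\rho(q)$ residue classes modulo $q$ (the roots of $f$ mod $q$, so $\rho(q)\le d$), at most one of which can contain a multiple of $q$. I would count the primes in these progressions by the Bombieri--Vinogradov theorem for $q\le x^{1/2-o(1)}$ and by Brun--Titchmarsh (and, crucially, by the Quadratic Brun--Titchmarsh Theorem~\ref{qbrun} of Wu--Xi when $d=2$) for $x^{1/2-o(1)}<q\le z$, and then invoke the Chebotarev/prime-ideal estimate $\sum_{q\le y}\rho(q)\log q\sim y$ — valid because $f$ is irreducible, so its Galois group is transitive on the roots and $\rho(q)$ has mean value $1$ — to replace $\rho(q)$ by $1$ on average. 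The $1/\log(x/q)$ weight in Brun--Titchmarsh turns $\sum_{q\le x^{1-\varepsilon}}\tfrac{\rho(q)\log q}{\phi(q)\log(x/q)}$ into $\log(1/\varepsilon)+O(1)$, so the upshot is
$$\sum_{q\le z}(\log q)\sum_{p<x}v_q(f(p))\ \le\ \bigl(2\log(1/\varepsilon)+C+o(1)\bigr)x$$
for an explicit constant $C$, whose exact value (together with the specific constant in the Quadratic Brun--Titchmarsh input for $d=2$) is what fixes the numerology.

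Feeding this back, $|\mathcal G(x)|\ge\dfrac{d-2\log(1/\varepsilon)-C-o(1)}{d\log x+O(1)}\,x\sim\dfrac{d-2\log(1/\varepsilon(d))-C}{d}\,\pi(x)$, which is $\gg\pi(x)$ exactly when $\varepsilon(d)$ is chosen with $2\log(1/\varepsilon(d))+C<d$, i.e. roughly $\varepsilon(d)>\exp\!\big(-(d+C)/2\big)$; optimising the constants gives the stated thresholds $\varepsilon(1)=0.3735$, $\varepsilon(2)=0.153$, and $\varepsilon(d)=\exp\!\big(\tfrac{-d-0.9788}{2}\big)$ for $d\ge3$ (the exponential decay in $d$ coming solely from the Chebotarev averaging of $\rho(q)$). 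This is the assertion of Theorem~\ref{density-result}; counting the distinct large prime divisors so produced — each such prime $>z$ divides $f(p')$ for $O_d(x^{\varepsilon})$ primes $p'<x$ — immediately recovers the lower bound $\log\lcm\{f(p)\mid p<x\}\gg x^{1-\varepsilon(d)}$ of Theorem~\ref{main-theorem}.

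I expect the sieve estimate for $\sum_{q\le z}(\log q)\#\{p<x:q\mid f(p)\}$ to be the real obstacle: one must extract every available saving, because the crude Brun--Titchmarsh constant $2$ combined with the trivial bound $\rho(q)\le d$ only yields an $\varepsilon(d)$ bounded away from $0$, so the exponential decay forces one to use simultaneously the Bombieri--Vinogradov range (improving $2$ to $1$ for $q\le x^{1/2}$), the Chebotarev mean value of $\rho(q)$ (the one place the dimension $d$ enters), and uniform control all the way up to $q=x^{1-\varepsilon}$, where the factor $1/\log(x/q)$ is largest. Everything else — the finitely many primes dividing $\disc f$ or the leading coefficient, the finitely many $p$ with $f(p)=0$ or $|f(p)|\le1$, and the case $q\mid p$ (forcing $p=q$) — is routine bookkeeping that does not affect the main terms.
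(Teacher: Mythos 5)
Your proposal is correct and follows essentially the same route as the paper: the lower bound you derive for $|\mathcal G(x)|$ is exactly the paper's Proposition \ref{main-bound} (that the contribution of primes $q>x^{1-\varepsilon(d)}$ to $\sum_{p<x}\log|f(p)|$ is $\gg x$), obtained from the same three inputs --- Bombieri--Vinogradov up to $x^{1/2}$, the Montgomery--Vaughan/Iwaniec/Wu--Xi Brun--Titchmarsh bounds on $(x^{1/2},x^{1-\varepsilon})$, and the mean value $\sum_{q\le y}\rho(q)\log q/q\sim\log y$ --- followed by the same final step of dividing the rough contribution by $d\log x$ to count primes. The one point needing care (handled in the paper's Lemma \ref{alpha-estimate}) is your claim that the $k\ge 2$ terms contribute $O(x/\log x)$: this requires counting primes in the progressions modulo $q^k$ via Brun--Titchmarsh, since the trivial count of integers in those progressions would only give $O(x)$, which is not negligible at the level of constants your numerology depends on.
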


The following table shows some values of $1-\varepsilon(d)$ for various $d.$

\begin{table}[ht]
\caption{Values of $1-\varepsilon(d)$}
\begin{tabular}{ |c|c|c|c|c|c|c|c|c| } 
 \hline
 $d$                & 1 & 2 & 3 & 4 & 5 & 6 & 7 & 8 \\ \hline
 $1-\varepsilon(d)$ & 0.6265 & 0.847 & 0.8632 & 0.9170 & 0.9496 & 0.9694 & 0.9814 & 0.9887  \\
 \hline
\end{tabular}
\label{table}
\end{table}

\noindent \textbf{Notations.} We employ Landau-Bachmann notations $\O$ and $o$ as well as their associated Vinogradov notations $\ll$ and $\gg.$ We say that $a(x)\sim b(x)$ if $$\lim_{x\to \infty} \frac{a(x)}{b(x)}=1.$$ As usual, define $\pi(x;m,a)$ to be the number of primes $p<x$ such that $p\equiv a\Mod m.$ Throughout the article, $p$ and $q$ will denote primes, and we fix an irreducible polynomial $f \in \ZZ[x]$ of degree $d\ge 1.$ We will often suppress the dependence of constants on $f$. At places, we may use Mertens' first theorem without commentary.

\section{Background}
\begin{theorem}[Brun-Titchmarsh, \cite{brun}]\label{theta-bound-vanilla}
    Let $\theta=\frac{\log m}{\log x},$ where $\theta\in(0, 1).$ Then,
    $$\pi(x;m,a)<(C(\theta)+o(1))\cdot\frac{x}{\phi(m)\log x}$$
    where 
    $$C(\theta)=
        \frac{2}{1-\theta}.$$
\end{theorem}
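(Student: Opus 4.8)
This is the classical Brun--Titchmarsh inequality; the proof I would give proceeds via the Selberg upper bound sieve. First I would dispose of the case $\gcd(a,m)>1$, where $\pi(x;m,a)\le1$, and then set $\mathcal A=\{n\le x:n\equiv a\Mod{m}\}$; for a squarefree $d$ with $\gcd(d,m)=1$ the Chinese Remainder Theorem merges $n\equiv a\Mod{m}$ and $d\mid n$ into a single congruence modulo $dm$, so that $\#\{n\in\mathcal A:d\mid n\}=\tfrac{x}{dm}+O(1)$. Fixing a parameter $z$, letting $P(z)=\prod_{q<z,\,q\nmid m}q$ (product over primes $q$), and noting that every prime $p$ with $z\le p\le x$ and $p\equiv a\Mod{m}$ is coprime to $P(z)$, I would reduce to a sieve problem: $\pi(x;m,a)\le\pi(z)+|S(\mathcal A,z)|$, where $S(\mathcal A,z)=\{n\in\mathcal A:\gcd(n,P(z))=1\}$.

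Next I would apply Selberg's sieve to $S(\mathcal A,z)$, the sieve density being the natural one, $\omega(q)/q=1/q$ at each prime $q\nmid m$. Its upper bound has main term $|\mathcal A|/G(z)$ together with an error $\ll\sum_{d<z^2}\mu^2(d)\,3^{\nu(d)}\,|r_d|$, where
$$G(z)=\sum_{\substack{d<z\\ \mu^2(d)=1,\ (d,m)=1}}\frac{1}{\phi(d)}\ \ge\ \frac{\phi(m)}{m}\sum_{d<z}\frac{\mu^2(d)}{\phi(d)}=(1+o(1))\,\frac{\phi(m)}{m}\log z$$
by the standard estimate $\sum_{d<z}\mu^2(d)/\phi(d)=\log z+O(1)$, and $r_d=\#\{n\in\mathcal A:d\mid n\}-\tfrac{x}{dm}=O(1)$, whence the error is $\ll z^2(\log z)^2$. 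I would then choose $z=(x/m)^{1/2-\delta}$ with $\delta=1/\log\log x$, so that both the error, which is $\ll(x/m)^{1-2\delta}(\log x)^2=o(x/m)$, and $\pi(z)$ are negligible next to $|\mathcal A|/G(z)$; since $\log z=(\tfrac12-\delta)\log(x/m)=(\tfrac12-\delta)(1-\theta)\log x$ by $\log m=\theta\log x$, this gives
$$\pi(x;m,a)\le(1+o(1))\,\frac{x}{\phi(m)\log z}=\Big(\frac{2}{1-\theta}+o(1)\Big)\frac{x}{\phi(m)\log x},$$
which is the claim with $C(\theta)=2/(1-\theta)$.

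The delicate points will be exactly the two features that pin down the constant. First, Selberg's optimisation of the weights $\lambda_d$ yields the coefficient $1$ — and nothing larger — in front of $|\mathcal A|/G(z)$, which is why one reaches the sharp $2$ rather than a worse numerical value. Second, the sieve's error term ranges over $r_d$ with $d<z^2$, and this is precisely what constrains $z\le(x/m)^{1/2}$ and thereby forces the factor $2=1/\tfrac12$. If one instead wanted the inequality in its strongest uniform shape, with no $o(1)$ and valid for every $m<x$, the cleanest route is the large sieve inequality in the Montgomery--Vaughan form, which outputs $\pi(x;m,a)<2x/(\phi(m)\log(x/m))$ directly; I would switch to that if uniformity in $m$ and $\theta$ were needed, but for the stated $o(1)$-version the Selberg argument above is enough.
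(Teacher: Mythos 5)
Your Selberg-sieve argument is correct, but note that the paper does not prove this statement at all: it is quoted as background and attributed to Montgomery and Vaughan's large sieve paper, whose theorem gives the uniform bound $\pi(x;m,a)\le 2x/\bigl(\phi(m)\log(x/m)\bigr)$ for all $m<x$, which with $\log m=\theta\log x$ is exactly $\frac{2}{1-\theta}\cdot\frac{x}{\phi(m)\log x}$ with no $o(1)$ needed --- precisely the alternative you mention in your last sentence. So your route is genuinely different from the paper's (a self-contained classical proof versus a citation of the sharper large-sieve form). The sieve computation itself is sound: the reduction to $\pi(z)+|S(\mathcal A,z)|$, the level-of-distribution input $r_d=O(1)$ via CRT, the lower bound $G(z)\ge\frac{\phi(m)}{m}\sum_{d<z}\mu^2(d)/\phi(d)$ (using $\sum_{d\mid\rad m}1/\phi(d)=m/\phi(m)$), and the choice $z=(x/m)^{1/2-\delta}$ all work. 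The one caveat worth making explicit is uniformity: your $o(1)$ terms (the error $\ll(x/m)^{1-2\delta}(\log x)^2$, the term $\pi(z)$, and the factor $(1-2\delta)^{-1}$) are only $o(1)$ relative to the main term when $\theta$ stays bounded away from $1$; this is harmless here, since the paper only invokes the theorem for $\theta\le\delta<1$ fixed, but the Montgomery--Vaughan form avoids the issue entirely. In short: correct proof, different (and weaker but sufficient) route than the source the paper relies on.
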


\begin{corollary}\label{brun-weak}
    Let $\e>0$ be a constant. Then,
    $$\pi(x;m,a)\ll_{\e} \frac{x}{\phi(m)\logx}$$
    for all positive integers $m<x^{1-\e}.$
\end{corollary}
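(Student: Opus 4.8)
The plan is to deduce this directly from the Brun--Titchmarsh inequality of Theorem~\ref{theta-bound-vanilla}, exploiting the fact that the constant $C(\theta)=\frac{2}{1-\theta}$ stays bounded as long as $\theta$ is bounded away from $1$ --- which is exactly what the hypothesis $m<x^{1-\e}$ guarantees. First I would dispose of the degenerate case $m=1$, which is not covered by Theorem~\ref{theta-bound-vanilla} (there $\theta=0\notin(0,1)$): here $\pi(x;1,a)=\pi(x)\sim x/\logx$ by the Prime Number Theorem and $\phi(1)=1$, so the claim is immediate. More generally, for $m$ in any bounded range one may simply use $\pi(x;m,a)\le\pi(x)\ll x/\logx$ and absorb $\phi(m)$ into the implied constant.

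Now suppose $2\le m<x^{1-\e}$ and put $\theta=\frac{\log m}{\logx}$. Since $m\ge 2$ we have $\theta>0$, and since $m<x^{1-\e}$ we have $\theta<1-\e$, so $\theta\in(0,1)$ and Theorem~\ref{theta-bound-vanilla} applies. Moreover $1-\theta>\e$, whence
$$C(\theta)=\frac{2}{1-\theta}<\frac{2}{\e}.$$
As this bound on $C(\theta)$ is uniform over all $m$ in the stated range, the $o(1)$ term in Theorem~\ref{theta-bound-vanilla} is uniform as well, so for all $x$ larger than some $x_0(\e)$ we obtain
$$\pi(x;m,a)<\left(\frac{2}{\e}+1\right)\frac{x}{\phi(m)\logx};$$
the range $2<x\le x_0(\e)$ contributes only bounded quantities (note $m<x^{1-\e}$ forces $x>2$, hence $\logx$ is bounded below), so it is absorbed into the implied constant. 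This yields $\pi(x;m,a)\ll_{\e} x/(\phi(m)\logx)$ for every $m<x^{1-\e}$.

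I do not expect any real obstacle here: the corollary is essentially a restatement of Theorem~\ref{theta-bound-vanilla} in a cruder but more convenient uniform form. The only points that require a little care are that the implied constant must be independent of $m$ --- which is precisely why one needs $\theta=\log m/\logx$ to stay strictly below $1$, the role played by $\e$ --- and the separate, trivial handling of the excluded value $m=1$.
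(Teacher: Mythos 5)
Your derivation is correct and is exactly the standard deduction the paper intends (it states the corollary without proof as an immediate consequence of Theorem~\ref{theta-bound-vanilla}): bound $C(\theta)=\frac{2}{1-\theta}<\frac{2}{\e}$ uniformly for $m<x^{1-\e}$, and handle $m=1$ and bounded $x$ trivially. No issues.
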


\begin{theorem}[Iwaniec, \cite{iwaniec}] \label{theta-bound-iwaniec}
    Let $\theta=\frac{\log m}{\log x}$ where $\theta\in[\frac 9{10}, \frac 23].$ Then,
    $$\pi(x;m,a)<(C(\theta)+o(1))\cdot\frac{x}{\phi(m)\log x},$$
    where 
    $$C(\theta)=
        \frac{8}{6-7\theta}.$$
\end{theorem}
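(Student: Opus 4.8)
\noindent The plan is to extract the bound from the linear (dimension one) sieve, the gain over the classical constant $C(\theta)=2/(1-\theta)$ of Theorem~\ref{theta-bound-vanilla} coming entirely from a bilinear treatment of the remainder term; this is the route carried out by Iwaniec. Writing $\theta=\log m/\log x$ and $M=x/m$, set $\mathcal A=\{a+km:1\le k\le M\}$ and let $\mathcal P$ be the set of all primes. Every prime $p\le x$ with $p\equiv a\Mod m$ exceeding a parameter $z$ lies in the sifted set $S(\mathcal A,\mathcal P,z)=\#\{n\in\mathcal A:(n,\prod_{\ell<z}\ell)=1\}$, so taking $z=x^{1-\theta}/(\log x)^{10}$ gives $\pi(x;m,a)\le S(\mathcal A,\mathcal P,z)+\pi(z)$ with $\pi(z)=o\!\bigl(x/(\phi(m)\log x)\bigr)$. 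The sifting density is $1/\ell$ for $\ell\nmid m$ and $0$ for $\ell\mid m$, so this is a linear sieve problem with $X:=|\mathcal A|\sim M$ and $|\mathcal A_\ell|=X/\ell+r_\ell$, where $r_\ell$ is a bounded sawtooth term depending on $\overline m\Mod\ell$.

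I would then invoke the Rosser--Iwaniec upper bound sieve in the form
$$S(\mathcal A,\mathcal P,z)\;\le\;X\,V(z)\bigl(F(s)+o(1)\bigr)\;+\;\Bigl|\sum_{\ell<D}\lambda^{+}_{\ell}\,r_{\ell}\Bigr|,$$
where $V(z)=\prod_{\ell<z,\ \ell\nmid m}(1-1/\ell)\le\tfrac{m}{\phi(m)}\cdot\tfrac{e^{-\gamma}+o(1)}{\log z}$, $s=\log D/\log z$, and $F(s)=2e^{\gamma}/s$ for $0<s\le 3$. Since $s\log z=\log D$, the main term collapses to $X\,V(z)\,F(s)\le(1+o(1))\tfrac{2x}{\phi(m)\log D}$, \emph{depending only on the level of distribution $D$}: with the trivial level $D=x^{1-\theta}$ (forced by $r_\ell=\O(1)$) this merely reproduces $C(\theta)=2/(1-\theta)$, so everything hinges on pushing $D$ higher. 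This is where the well-factorability of the Rosser--Iwaniec weights enters: $\lambda^{+}$ splits as a Dirichlet convolution of bounded sequences supported on $[1,D_1]$ and $[1,D_2]$ for any $D_1D_2=D$, so $\sum_{\ell<D}\lambda^{+}_\ell r_\ell$ becomes a bilinear form $\sum_u\sum_v\alpha_u\beta_v\,r_{uv}$. Expanding $r_{uv}$ into its Fourier modes and truncating the $h$-sum turns each such form into $\sum_h\tfrac1h\sum_u\sum_v\alpha_u\beta_v\,e\!\bigl(\pm\,ha\overline m/(uv)\bigr)$; reciprocity for $\overline m\Mod{uv}$ reduces the inner double sum to complete Kloosterman sums, and Weil's bound (or, alternatively, the large sieve applied with the varying moduli $uv$) then yields a power saving. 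Pushing the optimization through keeps the remainder $o\!\bigl(x/(\phi(m)\log x)\bigr)$ for $D$ as large as $x^{(6-7\theta)/4}$ in the relevant range of $\theta$ (where this exceeds $x^{1-\theta}$, i.e.\ $\theta<2/3$), and then $C(\theta)=\dfrac{2}{(6-7\theta)/4}=\dfrac{8}{6-7\theta}$.

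The decisive---and only genuinely hard---step is the bilinear estimate for $\sum_u\sum_v\alpha_u\beta_v\,e(ha\overline m/(uv))$: wringing enough cancellation out of it to surpass the trivial level $x/m$ is the entire source of the improvement over $2/(1-\theta)$, and it is what pins down both the exponent $(6-7\theta)/4$ and the admissible interval for $\theta$. The remaining ingredients---the reduction to a sifting problem, the asymptotic $V(z)\sim(m/\phi(m))e^{-\gamma}/\log z$ from Mertens' theorem, the identity $s\log z=\log D$, and the properties of the sieve function $F$---are routine.
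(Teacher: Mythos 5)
You should note first that the paper offers no proof of this statement at all: it is quoted as background from Iwaniec's 1982 paper (and the printed range $[\tfrac{9}{10},\tfrac23]$ is evidently a typo, since it is not even an interval; the authors only ever use $\theta\in[\tfrac12,\tfrac23]$). So the comparison has to be with Iwaniec's original argument, and against that benchmark your outline identifies the right machinery: reduce $\pi(x;m,a)$ to a linear-sieve problem for the progression $a\Mod m$, apply the Rosser--Iwaniec upper bound with the bilinear (well-factorable) form of the remainder, expand the congruence errors into additive characters, and beat the trivial level of distribution $x^{1-\theta}$ by exhibiting cancellation in Kloosterman-type bilinear sums via Weil's bound; the bookkeeping $C(\theta)=2\log x/\log D$ with effective level $D=x^{(6-7\theta)/4}$ is also the right way to see where $8/(6-7\theta)$ comes from. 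One presentational slip: for $n\equiv a\Mod m$, $n\equiv 0\Mod{uv}$ the error term is a sawtooth in $a\overline{uv}\Mod m$, so the exponential sums that arise naturally have modulus $m$; your version with modulus $uv$ is what reciprocity converts this into, so the two are equivalent, but it is worth saying which one you are actually estimating.

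As a proof, however, the proposal has a genuine gap, and it is the one you flag yourself: the assertion that the bilinear remainder $\sum_u\sum_v\alpha_u\beta_v r_{uv}$ stays $o\bigl(x/(\phi(m)\log x)\bigr)$ for $D$ as large as $x^{(6-7\theta)/4}$ is stated, not proved, and that estimate \emph{is} the theorem. Everything else you invoke --- Mertens for $V(z)$, $F(s)=2e^{\gamma}/s$, $s\log z=\log D$, the $\pi(z)$ correction --- is routine and, with only the trivial level, reproduces $2/(1-\theta)$, i.e.\ Theorem \ref{theta-bound-vanilla}. Closing the gap requires the specific structure of the Rosser weights (well-factorability with a free choice of $D=D_1D_2$), a careful truncation and completion of the Fourier expansion, Weil's bound for the resulting incomplete Kloosterman sums, and the optimization over $D_1$, $D_2$, $z$ that is precisely what pins down the exponent $(6-7\theta)/4$ and the admissible range of $\theta$; none of this is carried out, and the vague alternative ``or the large sieve with varying moduli $uv$'' would not by itself deliver the stated constant. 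In short, you have written an accurate road map of Iwaniec's proof rather than a proof; as an attribution that matches what the paper itself does (cite and use the bound), but it does not constitute a self-contained derivation of $C(\theta)=8/(6-7\theta)$.
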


\begin{theorem}[Wu and Xi, \cite{qbrun}]\label{qbrun}
    Let $A>0$ and $f(x)$ be an irreducible quadratic polynomial. Define $\varsigma(m)=\#\{p<x\mid f(p)\equiv 0 \Mod m\}$ and $\rho(m)$ to be the number of solutions of the congruence $f(x)\equiv 0 \Mod m.$ For large $L=x^{\theta}$ with $\theta\in[\tfrac 12,\tfrac{16}{17}),$
    we have
    $$\varsigma(m)\le(C(\theta)+o(1))\rho(m)\cdot \frac{x}{\phi(m)\log x},$$
    for all $m\in[L,2L]$ with at most $\O_A(L/(\log L)^A)$ exceptions,
    where 
    $$C(\theta)=\begin{cases}
        \frac{124}{91-89\theta} &, \text{ if }\theta\in[\frac 12,\frac{64}{97})\\
        \frac{120}{86-83\theta} &, \text{ if }\theta\in[\frac{64}{97},\frac{32}{41})\\
        \frac{28}{19-18\theta} &, \text{ if }\theta\in[\frac{32}{41},\frac{16}{17}).
    \end{cases}$$
\end{theorem}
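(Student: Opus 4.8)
The plan is to reduce the congruence count to primes in arithmetic progressions, to observe that the classical Brun--Titchmarsh inequality already proves the statement with the larger constant $\tfrac{2}{1-\theta}$ for every $m<x$, and then to sharpen the constant on $[\tfrac12,\tfrac{16}{17})$ by extracting cancellation on average over $m\in[L,2L]$ --- which is also what forces the exceptional set. Writing $\nu_1,\dots,\nu_{\rho(m)}$ for the roots of $f$ in $\ZZ/m\ZZ$ and discarding the $\O(\rho(m))$ primes dividing $m\cdot\disc(f)$, one has
$$\varsigma(m)=\sum_{j=1}^{\rho(m)}\pi(x;m,\nu_j)+\O(\rho(m))=\sum_{n<x}\frac{\von(n)}{\logn}\,\mathbf 1\!\left[f(n)\equiv 0\Mod m\right]+\O\!\left(\rho(m)\sqrt x\right),$$
the last error absorbing the prime powers.

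Next I would insert Heath--Brown's combinatorial identity for $\von$, truncated at level $z=x^{1/K}$ for a fixed $K$, which writes the sum above as $\O((\logx)^{2K})$ sums of the shape
$$\sum_{\substack{n_1\cdots n_k\sim x}}\Big(\prod_{i}\alpha_i(n_i)\Big)\,\mathbf 1\!\left[f(n_1\cdots n_k)\equiv 0\Mod m\right],$$
where each $\alpha_i$ is $\mu$, $\log$, or $\mathbf 1$, and the variables supported below $z$ carry the M\"obius factors. Grouping the $n_i$ into two blocks, each piece is either a \emph{Type I} sum $\sum_{d\le D}\gamma_d\sum_{n\le x/d}\mathbf 1[f(dn)\equiv 0\Mod m]$ with a divisor-bounded coefficient and $D$ a small power of $x$, or a genuinely bilinear \emph{Type II} sum $\sum_{\ell\sim P}\sum_{n\sim Q}a_\ell b_n\,\mathbf 1[f(\ell n)\equiv 0\Mod m]$ with $PQ\asymp x$ and $P,Q$ bounded away from $1$ and $x$; when $\theta>\tfrac12$ not every piece can be made Type I, so the remaining medium-length pieces must be treated bilinearly. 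For the Type I sums the inner count is $\rho_d(m)(x/d)/m+\O(\rho(m))$ for an explicit $\rho_d(m)\le\rho(m)$, so summing the main terms over all Heath--Brown pieces reconstructs a quantity comparable to $\rho(m)\,\pi(x)/\phi(m)$ times the unavoidable overcount of the weights, with total error $\O(D\rho(m)x^{o(1)})$, acceptable once $D<x^{1-\theta-\e}$.

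The heart of the matter is the Type II estimate, and here both the quadratic nature of $f$ and the averaging over $m$ are essential. Because $f$ is quadratic, $f(\ell n)\equiv 0\Mod m$ pins $\ell n$ down to $\rho(m)$ residues modulo $m$ (up to bounded divisor corrections); detecting this by additive characters modulo $m$, executing one variable sum, and applying Cauchy--Schwarz to remove $a_\ell$ (or $b_n$) reduces the quantity, after summing over $m\sim L$, to averages of Kloosterman sums $\sum_{m\sim L}m^{-1}\sum_{h}S(h\nu_1,h\nu_2;m)\,w(h)$. The Weil bound already gives a power saving in part of the range; to reach $\theta$ close to $\tfrac{16}{17}$ one additionally uses the spectral estimates for sums of Kloosterman sums over the modulus (Deshouillers--Iwaniec, as adapted by Fouvry and others to problems about $p^2+1$), together with the well-factorable shape of the sieve weights to keep $P,Q$ flexible. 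Optimising the admissible lengths against these exponential-sum savings in three regimes produces the three functions $\tfrac{124}{91-89\theta}$, $\tfrac{120}{86-83\theta}$, $\tfrac{28}{19-18\theta}$ on the stated sub-intervals, the numerators encoding the combinatorial identity and the sieve weights just as the constant $2$ does in the Montgomery--Vaughan treatment of Brun--Titchmarsh. Finally, all Type II contributions and Type I errors are bounded in mean square: one shows $\sum_{m\sim L}\bigl(E(m)/M(m)\bigr)^2\ll_A L/(\log L)^{2A}$, where $M(m)=\rho(m)x/(\phi(m)\logx)$ is the expected order and $E(m)$ the total error, so Chebyshev's inequality gives $E(m)=o(M(m))$ for all $m\in[L,2L]$ outside a set of size $\O_A(L/(\log L)^A)$, which is the claim.

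The main obstacle is precisely this Type II estimate: extracting square-root (or better) cancellation from the quadratic congruence $f(\ell n)\equiv 0\Mod m$ uniformly for $m\asymp x^{\theta}$ with $\theta$ as large as $\tfrac{16}{17}$ requires the full strength of the spectral theory of Kloosterman sums and a delicate treatment of well-factorable weights; the threshold $\tfrac{16}{17}$ is exactly where this input runs out. The exceptional set is intrinsic --- for $\theta\ge 1$ no single bound of the expected quality is available, so only an average statement can hold --- and the power-of-$\log$ saving in the mean square is what makes that set negligible for the intended sieve applications.
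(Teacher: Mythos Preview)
This statement is not proved in the paper at all: it appears in Section~2 (``Background'') as a quoted result of Wu and Xi, with a bare citation and no argument. There is therefore no ``paper's own proof'' to compare your proposal against; the authors simply import the inequality as a black box and use it later in bounding $\int_{1/2}^{\delta} C(\theta)\,\mathrm d\theta$ for $d=2$.

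As for the substance of your sketch: the architecture you describe (a combinatorial decomposition of $\Lambda$, Type~I/Type~II dissection, and cancellation in bilinear sums averaged over the modulus) is indeed the general shape of how results of this kind are obtained, and the need for an exceptional set and the role of the averaging are correctly identified. That said, your outline diverges from the actual Wu--Xi machinery in one respect worth flagging. Their input is not the Deshouillers--Iwaniec spectral theory of Kloosterman sums per se, but the framework of \emph{arithmetic exponent pairs for algebraic trace functions}; the specific constants $\tfrac{124}{91-89\theta}$, $\tfrac{120}{86-83\theta}$, $\tfrac{28}{19-18\theta}$ and the breakpoints $\tfrac{64}{97},\tfrac{32}{41},\tfrac{16}{17}$ come from optimising over admissible exponent pairs in that setting, not from a direct Kuznetsov-type bound. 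Your write-up also stops short of anything that could be called a proof --- no lengths are fixed, no error terms are tracked, and the crucial bilinear estimate is asserted rather than derived --- so while it is a reasonable narrative of the strategy, it would not stand on its own. For the purposes of the present paper none of this matters: the theorem is used, not proved.
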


\begin{theorem}[Bombieri-Vinogradov]\label{bomb}
    Let $A\ge 6$ and $Q\le x^{\tfrac 12}/(\log x)^A.$ Then,
    $$\sum_{q\le Q}\max_{2\le y\le x} \max_{(a,q)=1}\left|\pi(y;q,a)-\frac{y}{\phi(q)\log y}\right|\ll_A \frac{x}{(\log x)^{B}},$$
    where $B=A-5.$ 
\end{theorem}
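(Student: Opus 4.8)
The plan is to follow the classical argument: reduce to Dirichlet character sums, apply Vaughan's identity, and invoke the large sieve, with small moduli handled separately by Siegel--Walfisz. First, by partial summation it suffices to prove the analogous bound for $\psi(y;q,a)$ --- the sum of $\von(n)$ over $n\le y$ with $n\equiv a\Mod{q}$ --- with main term $y/\phi(q)$; passing between $\pi$, $\psi$ and the various main terms is routine and affects only the admissible value of $B$. Writing $\psi(y;q,a)=\frac1{\phi(q)}\sum_{\chi\bmod q}\bar\chi(a)\psi(y,\chi)$ with $\psi(y,\chi):=\sum_{n\le y}\von(n)\chi(n)$, splitting off the principal character --- whose contribution is $\ll x\exp(-c\sqrt{\logx})$ by the prime number theorem with a classical zero-free region, summed trivially over $q$ --- and inducing each non-principal $\chi$ from the primitive character $\chi^{*}$ modulo $r\mid q$ (with $\psi(y,\chi)-\psi(y,\chi^{*})=\O((\logx)^2)$), the problem reduces --- using $\phi(rm)\ge\phi(r)\phi(m)$ and $\sum_{m\le Q/r}1/\phi(m)\ll\logx$ --- to proving
\[
S:=\sum_{1<r\le Q}\frac1{\phi(r)}\sum_{\chi\bmod r}^{*}\ \max_{y\le x}|\psi(y,\chi)|\ \ll\ \frac{x}{(\logx)^{B''}}
\]
for a suitable $B''=B''(A)$.

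I would then split $S$ at $r=(\logx)^{C}$ for $C=C(A)$ large. For $r\le(\logx)^{C}$, Siegel--Walfisz gives $\max_{y\le x}|\psi(y,\chi)|\ll_{C}x\exp(-c_{C}\sqrt{\logx})$ for every non-principal $\chi$, so this range contributes $\ll(\logx)^{C}x\exp(-c_{C}\sqrt{\logx})=\O(x\exp(-c'\sqrt{\logx}))$; this is the one step that makes the implied constant ineffective, through Siegel's theorem. For $(\logx)^{C}<r\le Q$, Vaughan's identity with parameters $U=V=x^{1/3}$ (a Heath-Brown identity would do as well) expresses $\psi(y,\chi)$, after dyadic decomposition of all ranges, as a sum of $\O((\logx)^{2})$ bilinear pieces of the shape
\[
\sum_{M<m\le 2M}\ \sum_{\substack{K<k\le 2K\\ mk\le y}}a_m\,b_k\,\chi(mk),\qquad MK\asymp y,\quad M,K\le x^{2/3},
\]
with $\sum_m|a_m|^2\ll M(\logx)^{\O(1)}$ and $\sum_k|b_k|^2\ll K(\logx)^{\O(1)}$ in the ``Type II'' case ($x^{1/3}\le M,K$), while in the ``Type I'' case one of the two sequences is the indicator of an interval.

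For a single bilinear piece I would first dissolve the hyperbolic constraint $mk\le y$ --- and with it the outer maximum over $y$ --- by a truncated Perron integral (trivially when $y$ is far from $MK$), reducing the piece to a genuine product $\bigl(\sum_m a_m'\chi(m)\bigr)\bigl(\sum_k b_k'\chi(k)\bigr)$ with a bound essentially uniform in $y$, at the cost of $\O(1)$ more powers of $\logx$. Then, after a dyadic decomposition $r\asymp R$ of the modulus (so that $1/\phi(r)\asymp R^{-1}\,r/\phi(r)$, which is what the large sieve likes), Cauchy--Schwarz in the pair $(r,\chi)$ together with two applications of the multiplicative large sieve inequality
\[
\sum_{r\le R}\frac r{\phi(r)}\sum_{\chi\bmod r}^{*}\Bigl|\sum_{n\sim N}c_n\chi(n)\Bigr|^2\ \ll\ (R^2+N)\sum_{n\sim N}|c_n|^2
\]
bound the piece, for fixed $R$, by $\ll(\logx)^{\O(1)}\bigl(xR^{-1}+(xM)^{1/2}+(xK)^{1/2}+x^{1/2}R\bigr)$. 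Summing over the $\O(\logx)$ dyadic $R$ with $(\logx)^{C}<R\le Q$ and over the $\O((\logx)^2)$ Vaughan pieces, and using $M,K\le x^{2/3}$ and $Q\le x^{1/2}/(\logx)^{A}$: the terms $(xM)^{1/2},(xK)^{1/2}$ are each $\ll x^{5/6}(\logx)^{\O(1)}\ll x(\logx)^{-A}$, the term $x^{1/2}R$ sums to $\ll x^{1/2}Q\le x(\logx)^{-A}$, and the term $xR^{-1}$ sums to $\ll x(\logx)^{-C}$; hence $S\ll x(\logx)^{-A+\O(1)}$, and tracking the accumulated logarithmic factors through the reductions above yields precisely $B=A-5$.

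The genuinely essential input here is the large sieve inequality: without it the average over moduli buys nothing and one is stuck near the trivial bound $\ll x\logx$. Consequently the main technical obstacle will be arranging the bilinear (Type II) sums so that the large sieve applies cleanly --- removing the constraint $mk\le y$ and the maximum over $y$ without destroying the product structure or the power saving, and dyadically splitting the modulus to reconcile the natural weight $1/\phi(r)$ with the large sieve's $r/\phi(r)$. The Type I sums are comparatively easy (one factor being an interval character sum that the large sieve handles with room to spare), and the small-modulus regime is elementary once Siegel--Walfisz is granted --- at the price, inherent to the method, of an ineffective implied constant.
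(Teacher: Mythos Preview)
Your outline is the standard textbook proof of Bombieri--Vinogradov (reduction to primitive characters, Siegel--Walfisz below a power of $\log x$, Vaughan's identity plus the multiplicative large sieve above), and as a sketch it is correct. However, the paper does not prove this theorem at all: Theorem~\ref{bomb} is simply quoted as a known background result and then invoked in the proof of Proposition~\ref{small-prime-estimate}. So there is no ``paper's own proof'' to compare against --- you have supplied a proof where the authors, appropriately, supplied only a citation. If anything, be cautious about the claim that the bookkeeping ``yields precisely $B=A-5$'': the exact relation between $A$ and $B$ depends on how many logarithms one loses in the Perron step, the dyadic splittings, and the passage from $\psi$ to $\pi$, and different standard references arrive at slightly different constants; for the paper's purposes any fixed polynomial loss in $\log x$ suffices.
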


\begin{lemma}\label{merten}
    Let $f$ be an irreducible integer polynomial and $\rho(m)$ be the number of roots of the congruence $f(x)\equiv 0 \Mod m.$ Then,
    $$\sum_{p<x} \frac{\rho(p)\log p}{p-1}=\log x + R +o(1)$$
    for some constant $R.$
\end{lemma}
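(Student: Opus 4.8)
The plan is to reinterpret $\rho(p)$ through the splitting of $p$ in the number field $K=\mathbb Q(\alpha)$ generated by a root $\alpha$ of $f$, feed this into Landau's prime ideal theorem, and finish with a routine partial summation. Write $n=\deg f$ and $\Theta(x)=\sum_{p<x}\rho(p)\log p$. Let $a$ be the leading coefficient of $f$; then $\beta=a\alpha$ is an algebraic integer whose monic minimal polynomial lies in $\ZZ[x]$, and for every prime $p\nmid a$ one checks that the roots of $f$ modulo $p$ are in bijection (via $x\mapsto ax$) with the roots of the minimal polynomial of $\beta$ modulo $p$. By Dedekind's theorem on the factorization of primes, for every $p$ outside the finite set $S$ of primes dividing $a$ or the index $[\O_K:\ZZ[\beta]]$, the number $\rho(p)$ equals the number of prime ideals of $\O_K$ of residue degree $1$ lying above $p$. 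Hence
$$\Theta(x)=\sum_{\substack{N\mathfrak p<x\\ \deg\mathfrak p=1}}\log N\mathfrak p+\O(1),$$
the $\O(1)$ absorbing the finitely many primes in $S$.

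Next I would invoke Landau's prime ideal theorem in the form $\psi_K(x):=\sum_{N\mathfrak a<x}\Lambda_K(\mathfrak a)=x+\O\bigl(x\exp(-c\sqrt{\log x})\bigr)$ for a constant $c=c_K>0$. The difference between $\psi_K(x)$ and $\sum_{N\mathfrak p<x,\,\deg\mathfrak p=1}\log N\mathfrak p$ is supported on proper prime-ideal powers and on prime ideals of degree $\ge 2$; in either case the norm is at least the square of a rational prime, and at most $n$ prime ideals lie above each rational prime, so this difference is $\ll\sqrt x\log x$. Combining with the previous display, $\Theta(x)=x+\O\bigl(x\exp(-c\sqrt{\log x})\bigr)$.

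Finally, Abel summation gives
$$\sum_{p<x}\frac{\rho(p)\log p}{p}=\frac{\Theta(x)}{x}+\int_2^x\frac{\Theta(t)}{t^2}\,dt=\log x+\Bigl(1-\log 2+\int_2^x\frac{\Theta(t)-t}{t^2}\,dt\Bigr)+o(1),$$
and the error estimate for $\Theta$ makes $t\mapsto(\Theta(t)-t)/t^2$ absolutely integrable on $[2,\infty)$, so $\int_2^x(\Theta(t)-t)t^{-2}\,dt=R_0+o(1)$ for a constant $R_0$. Since $\tfrac1{p-1}-\tfrac1p=\tfrac1{p(p-1)}$ and $\rho(p)\le n$, the series $\sum_p\rho(p)\log p/(p(p-1))$ converges, so replacing $p$ by $p-1$ in the denominator changes the sum only by a constant plus $o(1)$. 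Altogether $\sum_{p<x}\rho(p)\log p/(p-1)=\log x+R+o(1)$ for a suitable constant $R$, as claimed.

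The only inputs requiring genuine care are the identification of $\rho(p)$ with a count of degree-one primes of $K$ — Dedekind's theorem together with the harmless removal of the finitely many bad primes coming from the non-monic case — and the prime ideal theorem with its classical error term (one could equally cite an effective form of the Chebotarev density theorem). Everything else is bookkeeping, so I do not expect a real obstacle; the mildly delicate point is simply making sure the exceptional primes and the higher-degree prime ideals contribute only $\O(1)$ to the final sum, which the bounds above handle comfortably.
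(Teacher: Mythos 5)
Your proof is correct, and at bottom it runs along the same lines as the paper's: an equidistribution input of prime-ideal-theorem strength for $\rho(p)$ on average, then partial summation, then the trivial observation that replacing $1/p$ by $1/(p-1)$ only shifts the sum by a convergent series. The difference is in how the input is sourced and weighted. The paper quotes Serre's estimate $\sum_{p<x}\rho(p)=\li(x)+\O\left(x/(\log x)^3\right)$ as a black box and then applies Abel summation with the weight $\log t/t$ to reach $\sum_{p<x}\rho(p)\log p/p$; you instead re-derive the arithmetic content yourself --- passing to the monic minimal polynomial of $a\alpha$, invoking Dedekind's theorem to identify $\rho(p)$ with the number of degree-one primes above $p$ outside a finite bad set, and then applying Landau's prime ideal theorem for $\psi_K$ with the $\sqrt{x}\log x$ bound disposing of prime-ideal powers and higher-degree primes --- so that you work directly with the Chebyshev-type sum $\Theta(x)=\sum_{p<x}\rho(p)\log p=x+\O\bigl(x\exp(-c\sqrt{\log x})\bigr)$ and only need a single Abel summation with weight $1/t$. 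Your route is more self-contained (it only cites classical algebraic number theory plus Landau) and the resulting partial summation is arguably cleaner than the paper's, at the cost of more explicit number-field bookkeeping; both arguments deliver $\log x+R+o(1)$, indeed with an error term that is quantitatively better than the stated $o(1)$.
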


\begin{proof}
    By \cite[3.3.3.5]{serre}, we have that 
    $$\sum_{p<x}\rho(p)=\li(x) + \O\left(\frac{x}{(\log x)^3}\right),$$
    where $\li(x)$ is the logarithmic integral. Applying Abel summation formula,
    \begin{align*}
        \sum_{p<x} \frac{\rho(p)\log p}{p} &= \frac{\log x}{x}\sum_{p<x}\rho(p)+\int_{2}^x\frac{\log x-1}{x^2}\left(\sum_{p<u}\rho(p)\right)\mathrm{d}u+C_0\\
                                           &= C_0+1+\O\left(\frac{1}{\log x}\right) + \int_{2}^x\frac{\log u-1}{u^2}\li(u)\mathrm{d}u+\O\left(\int_{2}^x\frac{\log u-1}{u(\log u)^3}\mathrm{d}u\right)\\
                                           &= \log x + C_1 + \O\left(\frac{1}{\log x}\right)
    \end{align*}
    for some constants $C_0$ and $C_1$.
    And the sum
    $$\sum_{p<x}\frac{\rho(p)\log p}{p-1}-\sum_{p<x}\frac{\rho(p)\log p}{p}=\sum_{p<x}\frac{\rho(p)\log p}{p(p-1)}$$
    is $C_2+o(1)$ for some constant $C_2$. Hence, our lemma is proved.
\end{proof}

\section{Proof of Theorem \ref{main-theorem}}

\subsection{Setup}
We study the product defined by 
$$Q(x)=\prod_{q<x}|f(q)|=\prod_{p}p^{\alpha_p(x)}$$
and exploit the fact that the contribution of prime factors less than $\xdelta$ is negligible compared to that of prime factors greater than $\xdelta,$ where $\delta$ is a parameter in $(\tfrac 12,1)$ to be chosen later. For some large enough constant $B,$ set $\xx=x^{1/2}(\log x)^{-B}$ for brevity. 

Define $\varrho(m)$ to be the set of residues modulo $m$ which satisfy the congruence $f(x)\equiv 0 \Mod m$ and $\rho(m)$ to be the cardinality of $\varrho(m).$ Note that we have $\rho(m)\le d$ by Lagrange's theorem and that if $p\nmid \disc f$ then $\rho(p)=\rho(p^n)$ for all $n\ge 2$ by Hensel's lemma.
    Also define $\varsigma(m)$ to be the sum
    $$\sum_{r\in\varrho(m)}\pi(x;m,r),$$
    the number of elements in $\{f(p)\mid p<x\}$ divisible by $m.$

\subsection{Estimating small primes}
We define 
$$Q_S(x) = \prod_{p<\xx} p^{\alpha_p(x)},$$
the part of $Q(x)$ consisting of small prime divisors. The main result here is the following.

\begin{proposition}\label{small-prime-estimate}
    $\log Q_S(x) = \frac x2 - \frac{Bx\log \log x}{\log x} +\O\left(\frac{x}{\log x}\right)$
\end{proposition}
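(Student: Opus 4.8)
The plan is to expand $\log Q_S(x)$ as a double sum over small primes and prime powers, extract a main term from the first prime power by feeding Lemma~\ref{merten} the truncation $\xx$, and absorb everything else into $\O(x/\log x)$. First I would write, using $\alpha_p(x)=\v_p(Q(x))=\sum_{q<x}\v_p(|f(q)|)=\sum_{n\ge 1}\#\{q<x:p^n\mid f(q)\}$,
\[
\log Q_S(x)=\sum_{p<\xx}\alpha_p(x)\log p=\sum_{p<\xx}\log p\sum_{n\ge 1}\varsigma(p^n);
\]
the term $n=1$ carries the main term, while the contribution of $n\ge 2$ will be shown to be $\O(x/\log x)$.

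\emph{The main term} ($n=1$). In $\varsigma(p)=\sum_{r\in\varrho(p)}\pi(x;p,r)$ I separate the unique root $r\equiv 0\Mod p$, which occurs only when $p\mid f(0)$ and then contributes $\pi(x;p,0)\le 1$ (the sum of $\log p$ over such primes is $\le\log|f(0)|=\O(1)$; the degenerate case $f=\pm x$ is set aside). For each root with $(r,p)=1$ write $\pi(x;p,r)=\frac{x}{(p-1)\log x}+E(x;p,r)$; as there are $\rho(p)-[p\mid f(0)]$ such roots,
\[
\sum_{p<\xx}\log p\,\varsigma(p)=\frac{x}{\log x}\sum_{p<\xx}\frac{\rho(p)\log p}{p-1}+\O\!\left(\frac{x}{\log x}\right)+\sum_{p<\xx}\log p\sum_{\substack{r\in\varrho(p)\\(r,p)=1}}E(x;p,r).
\]
Lemma~\ref{merten} with upper limit $\xx=x^{1/2}(\log x)^{-B}$ gives $\sum_{p<\xx}\tfrac{\rho(p)\log p}{p-1}=\log\xx+R+o(1)=\tfrac12\log x-B\log\log x+\O(1)$, so the first two terms equal $\tfrac x2-\tfrac{Bx\log\log x}{\log x}+\O(x/\log x)$ — and this is precisely where the size of $\xx$ produces the secondary term. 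For the error sum I would use $\rho(p)\le d$ and $\log p<\log\xx$ to bound it by $d\log\xx\sum_{q\le\xx}\max_{2\le y\le x}\max_{(a,q)=1}\bigl|\pi(y;q,a)-\tfrac{y}{\phi(q)\log y}\bigr|$, and then invoke Bombieri--Vinogradov (Theorem~\ref{bomb}) with $Q=\xx$ and $A=B$: the right-hand side is $\ll x(\log x)^{-(B-5)}$, so the error sum is $\ll x(\log x)^{-(B-6)}=\O(x/\log x)$ once $B\ge 7$.

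\emph{Higher prime powers} ($n\ge 2$). Since no prime $q$ satisfies $p^n\mid q$ for $n\ge 2$, here $\varsigma(p^n)=\sum_{r\in\varrho(p^n),\,(r,p)=1}\pi(x;p^n,r)$, and throughout $\rho(p^n)=\O(1)$ (equal to $\rho(p)\le d$ for $p\nmid\disc f$ by Hensel, bounded for the finitely many $p\mid\disc f$). The point is that the purely trivial bound $\varsigma(p^n)\le\rho(p^n)(x/p^n+1)$ is \emph{not} sufficient here — summed over all $p<\xx$, $n\ge 2$ it only gives $\O(x)$ — so I would split at $p^n=x^{1-\e}$ for a fixed small $\e>0$. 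When $p^n<x^{1-\e}$, Corollary~\ref{brun-weak} gives $\pi(x;p^n,r)\ll x/(\phi(p^n)\log x)\ll x/(p^n\log x)$, hence $\varsigma(p^n)\ll x/(p^n\log x)$ and
\[
\sum_{p<\xx}\log p\sum_{\substack{n\ge 2\\ p^n<x^{1-\e}}}\varsigma(p^n)\ll\frac{x}{\log x}\sum_{p}\frac{\log p}{p(p-1)}\ll\frac{x}{\log x}.
\]
When $p^n\ge x^{1-\e}$ — with also $n\ge 2$, $p<\xx$, and $p^n\ll x^d$ since $|f(q)|\ll x^d$ for $q<x$ — the trivial bound suffices: $x/p^n\le x^{\e}$ gives $\varsigma(p^n)\ll x^{\e}$, and for each $p$ there are $\ll_d\log x/\log p$ admissible exponents $n$, so the total is $\ll x^{\e}\log x\,\pi(\xx)\ll x^{1/2+\e}(\log x)^{-B}$. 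Both pieces are $\O(x/\log x)$.

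Collecting the $n=1$ and $n\ge 2$ contributions yields $\log Q_S(x)=\tfrac x2-\tfrac{Bx\log\log x}{\log x}+\O(x/\log x)$. I expect the main obstacle to be the $n\ge 2$ estimate: one must notice that the naive divisor bound loses a factor of $\log x$ and therefore invoke Brun--Titchmarsh (Corollary~\ref{brun-weak}), where it is precisely the primality of the argument $q$ that restores the saving, and then treat the remaining ``tall'' powers $p^n\ge x^{1-\e}$ by a separate crude count; by comparison, the Bombieri--Vinogradov step — used right at the edge $\xx=x^{1/2}(\log x)^{-B}$ of its range, which is also what forces the $\log\log x$ correction — is essentially routine.
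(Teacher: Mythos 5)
Your proof is correct and follows essentially the same route as the paper: Brun--Titchmarsh (Corollary \ref{brun-weak}) plus a trivial count disposes of the higher prime powers, while Bombieri--Vinogradov (Theorem \ref{bomb}) combined with Lemma \ref{merten} evaluated at the cutoff $\xx$ yields the main term $\frac x2-\frac{Bx\log\log x}{\log x}$. The differences are only organizational --- the paper routes the prime-power estimates through Lemma \ref{alpha-estimate} and applies Bombieri--Vinogradov to the full $\von$-weighted sum over $m<\xx$ rather than just to $n=1$, and you are in fact a bit more careful than the paper about the non-coprime residue class $0$.
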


The proof uses an estimate on $\alpha_p(x)$ making it easy to directly apply the Bombieri-Vinogradov theorem (Theorem \ref{bomb}) in the end. The following result is proved by standard analysis involving Hensel's lemma and the Brun-Titchmarsh theorem (Corollary \ref{brun-weak}).
\begin{lemma}\label{alpha-estimate}
    Let $p$ be a prime. If $p\nmid \disc f,$ then
    $$\alpha_p(x)=\sum_{p^n<\xx}\varsigma(p^n)+\O\left(\frac{x}{\max\{p,\xx\}\log x} + \frac{(\log x)^{2B}}{\log p}\right);$$
    else if $p\mid \disc f,$ we have 
    $$\alpha_p(x) = \varsigma(p).$$
\end{lemma}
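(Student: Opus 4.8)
The plan is to expand $\alpha_p(x)$ into a sum of the quantities $\varsigma(p^n)$ over prime powers, estimate each term with the Brun--Titchmarsh inequality (Corollary~\ref{brun-weak}), and peel off the powers below $\xx$ as the main term. To begin, unique factorization gives $\alpha_p(x)=\sum_{q<x}\v_p(|f(q)|)=\sum_{n\ge1}\#\{q<x:\ p^n\mid f(q)\}$, with $q$ running over primes. A prime $q\ne p$ with $p^n\mid f(q)$ lies in one of the $\le\rho(p^n)$ residue classes modulo $p^n$ determined by a root $r\in\varrho(p^n)$ coprime to~$p$; the prime $q=p$ and the at most one root $r$ divisible by~$p$ contribute only $\O(\log x/\log p)$ in total over all $n$, and the $n$-sum is finite because $p^n\nmid f(q)$ once $p^n>\max_{q<x}|f(q)|\ll_f x^d$. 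Hence $\alpha_p(x)=\sum_{p^n<\xx}\varsigma(p^n)+\sum_{p^n\ge\xx}\varsigma(p^n)+\O(\log x/\log p)$, and since the last error is clearly absorbed, it remains to bound the tail $\sum_{p^n\ge\xx}\varsigma(p^n)$ by $\O\bigl(\tfrac{x}{\max\{p,\xx\}\logx}+\tfrac{(\log x)^{2B}}{\log p}\bigr)$.

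Suppose $p\nmid\disc f$, so $f$ is separable modulo~$p$ and Hensel's lemma gives $\rho(p^n)=\rho(p)\le d$ for all $n\ge1$, whence $\varsigma(p^n)\le d\max_{(r,p)=1}\pi(x;p^n,r)$. Fix a small $\eta\in(0,\tfrac12)$. For $\xx\le p^n<x^{1-\eta}$, Corollary~\ref{brun-weak} gives $\varsigma(p^n)\ll\tfrac{x}{\phi(p^n)\logx}\ll\tfrac{x}{p^n\logx}$; since the least power of~$p$ that is $\ge\xx$ is itself at least $\max\{p,\xx\}$, summing the geometric series in $p^{-n}$ contributes $\ll\tfrac{x}{\max\{p,\xx\}\logx}$. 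For $p^n\ge x^{1-\eta}$ I fall back on the trivial bound $\varsigma(p^n)\le d\lceil x/p^n\rceil$: only $\O(\log x/\log p)$ exponents $n$ contribute, their roundings overshoot by $\O(\log x/\log p)$ in total, the sum of the terms $x/p^n$ is $\ll x^{\eta}\ll x^{1/2}(\log x)^{B-1}=\tfrac{x}{\xx\logx}$, and once $p^n\ge\xx^2=x(\log x)^{-2B}$ each term $x/p^n$ is at most $x/\xx^2=(\log x)^{2B}$. Collecting these pieces gives the claimed bound.

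For $p\mid\disc f$, which is only $\O_f(1)$ primes, one has $\rho(p^n)\ll_f1$ uniformly in~$n$, and the same termwise application of Brun--Titchmarsh shows $\sum_{n\ge2}\varsigma(p^n)$ is negligible against $\varsigma(p)$ for $x$ large, giving $\alpha_p(x)=\varsigma(p)$. I expect the main obstacle to be the bookkeeping for the prime powers $p^n$ lying just below~$x$: there the Brun--Titchmarsh estimate degrades and one must switch to the trivial bound, keeping track carefully enough that nothing coarser than $\tfrac{x}{\max\{p,\xx\}\logx}+\tfrac{(\log x)^{2B}}{\log p}$ survives. The definition $\xx=x^{1/2}(\log x)^{-B}$ is calibrated precisely so that $x/\xx^{2}$ is only polylogarithmic in~$x$, which is what makes this part go through.
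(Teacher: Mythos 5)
For the main case $p\nmid\disc f$ your argument is correct and is essentially the paper's own: expand $\alpha_p(x)=\sum_{n\ge1}\varsigma(p^n)$, keep the powers below $\xx$ as the main term, control mid-sized powers with Corollary \ref{brun-weak} plus the geometric decay of $p^{-n}$, and use trivial counting at the very top; your cutoffs $x^{1-\eta}$ and $\xx^2$ play exactly the role of the paper's $x^{0.9}$ and $x$. Your final ``collecting'' step does rely on an unstated case split: for $p\le\xx$ you compare $x^{\eta}$ against $x/(\xx\logx)$, while for $p>\xx$ there are only $\O(1)$ exponents in the top range, each term at most $(\log x)^{2B}$, which is dominated by $x/(p\log x)$ as long as $p\le x^{1-\eta}$ (a per-term bound times $\O(\log x/\log p)$ terms alone would lose a factor $\log x$). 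With that spelled out, the bound is justified in the range $p\le x^{1-\eta}$, which is all the paper ever uses; for $p$ extremely close to $x$ the trivial count overshoots the stated error by a factor $\log x$, but the paper's own proof (its $\ll x^{0.2}$ bound on the interval $(x^{0.9},x)$) has exactly the same limitation, so I do not count this against you.

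The genuine gap is the clause $p\mid\disc f$. Your argument there both asserts something false and draws a conclusion it could not yield even if true: an exact identity $\alpha_p(x)=\varsigma(p)$ cannot follow from ``$\sum_{n\ge2}\varsigma(p^n)$ is negligible,'' and that sum need not be negligible, because Hensel lifting is precisely what fails at primes dividing the discriminant. For instance, $f(x)=x^2-2x-17=(x-1)^2-18$ is irreducible with $\disc f=72$, and for $p=3$ every prime $q\equiv 1\Mod 3$ satisfies $\nu_3(f(q))=2$ exactly; hence $\varsigma(9)=\varsigma(3)\sim\tfrac12\pi(x)$ and $\alpha_3(x)=2\varsigma(3)$. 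So the negligibility claim fails, and indeed the displayed identity $\alpha_p(x)=\varsigma(p)$ is itself false as stated --- the paper's one-line ``easy to solve'' glosses over this. What the later argument actually needs from the $\O_f(1)$ primes dividing $\disc f$ is only a crude bound such as $\alpha_p(x)\log p\ll_f x/\log x$, which does follow from your (correct) observation $\rho(p^n)\ll_f 1$ together with the same Brun--Titchmarsh and trivial-count computation you ran in the main case; replacing the exact equality by that bound is the honest fix, and everything downstream goes through.
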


\begin{proof}
    The case when $p\mid\disc f$ is easy to solve. So, let us assume $p\nmid \disc f.$  
    Observe that
    $$\alpha_p(x)=\sum_{n=1}^{\infty}\varsigma(p^n).$$
    When $p^n\ge x,$ we see that $\varsigma(p^n)\le \rho(p^n)\le d.$ If $p^n$ divides $f(k)$ for some $1\le k \le x,$ we have $p^n\le f(k)\le f(x)<x^{d+1}$, which implies that $n<(d+1)\frac{\log x}{\log p}.$
    Thus,
    $$\alpha_p(x)=\sum_{n=1}^{\infty}\varsigma(p^n)=\sum_{p^n<x}\varsigma(p^n)+\O\left(\frac{\log x}{\log p}\right).$$
    We split the summation into three intervals: $p^n\in[1,\xx]\cup(\xx,x^{0.9}]\cup (x^{0.9},x).$
    The last summation is 
    \begin{align*}
        \sum_{p^n\in(x^{0.9},x)}\varsigma(p^n)\le \sum_{p^n\in(x^{0.9},x)}\sum_{r\in\varrho(p^n)}\left(\frac{x}{p^n}+1\right)\le \sum_{p^n\in(x^{0.9},x)}\rho(p^n)(x^{0.1}+1)\ll x^{0.2}.
    \end{align*}
    By Corollary \ref{brun-weak}, the second summation is
    \begin{align*}
        \sum_{p^n\in(\xx, x^{0.9}]}\varsigma(p^n) &\ll \frac{\rho(p)x}{\log x}\sum_{\xx <p^n\le x^{0.9}} \frac{1}{\phi(p^n)}\\
                                                  &\ll \frac{x}{\max\{p,\xx\}\log x}+ \frac{x}{\log x}\sum_{\substack{n\ge 2 \\ \xx <p^n\le x^{0.9}}} \frac{1}{p^n}\\
                                                                          &\ll \frac{x}{\max\{p,\xx\}\log x}+\frac{x}{\log x}\cdot\frac{\log x}{\log p}\cdot\frac{1}{p^2}\\
                                                                          &\ll \frac{x}{\max\{p,\xx\}\log x} + \frac{(\log x)^{2B}}{\log p}.
    \end{align*}
    Thus, our lemma is proved.
\end{proof}

\begin{proof}[Proof of Proposition \ref{small-prime-estimate}]
    Using Lemma \ref{alpha-estimate},
    \begin{align*}
        \log Q_S(x) &= \sum_{p<\xx}\alpha_p(x) \log{p}\\
                    &= \sum_{p<\xx}\left(\sum_{p^n<\xx}\varsigma(p^n)+\O\left(\frac{x}{\xx\log x} + \frac{(\log x)^{2B}}{\log p}\right)\right) \log{p}\\
                    &= \sum_{m<\xx}\varsigma(m)\von(m) + \O\left(\frac{x}{\log x}\right).
    \end{align*}
    Using Theorem \ref{bomb} and Lemma \ref{merten}, we can estimate the above sum as
    \begin{align*}
        \sum_{m<\xx}\varsigma(m)\von(m) &= \frac{x}{\log x}\sum_{m<\xx}\frac{\rho(m)\von(m)}{\phi(m)}+\O\left(\frac{x}{(\log x)^{B-5}}\right)\\
                                                            &= \frac{x}{\log x}(\tfrac 12\logx - B\log\log x)+\O\left(\frac{x}{(\log x)^{B-5}}\right)\\
                                                            &= \frac x2 - \frac{Bx\log \log x}{\log x} +\O\left(\frac{x}{\log x}\right),
    \end{align*}
   proving the result. 
\end{proof}

\subsection{Removing medium-sized primes}
Define the product $$Q_M(x)=\prod_{\xx\le p\le x^{1/2}}p^{\alpha_p(x)},$$
the part of $Q(x)$ consisting of medium-sized primes. The main result of this section is the following. 
\begin{proposition}\label{medium-prime-estimate}
    $\log Q_M(x)\ll \frac{x\log\log x}{\log x}.$
\end{proposition}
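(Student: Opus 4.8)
The plan is to estimate $\log Q_M(x)=\sum_{\xx\le p\le x^{1/2}}\alpha_p(x)\log p$ directly, exploiting that the window $[\xx,x^{1/2}]=[x^{1/2}(\log x)^{-B},x^{1/2}]$ is extremely thin on the $\log\log$ scale. First I would note that for $x$ large enough every prime $p$ in this range satisfies $p\nmid\disc f$ (the primes dividing $\disc f$ are bounded while $\xx\to\infty$), so $\rho(p^n)=\rho(p)\le d$ for all $n\ge1$ by Hensel's lemma and Lagrange's theorem. Writing $\alpha_p(x)=\sum_{n\ge1}\varsigma(p^n)=\varsigma(p)+\sum_{n\ge2}\varsigma(p^n)$, the higher-power tail is harmless: from $\varsigma(p^n)\le\rho(p^n)(x/p^n+1)\le d(x/p^n+1)$ together with $p\ge\xx$ one gets $\sum_{n\ge2}\varsigma(p^n)\ll x/p^2+\log x/\log p\ll(\log x)^{2B}$, whence
$$\sum_{\xx\le p\le x^{1/2}}\Bigl(\sum_{n\ge2}\varsigma(p^n)\Bigr)\log p\ll x^{1/2}(\log x)^{2B}=o\!\left(\frac{x\log\log x}{\log x}\right).$$

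It then remains to bound $\sum_{\xx\le p\le x^{1/2}}\varsigma(p)\log p$. Since $p\le x^{1/2}<x^{3/4}$, Corollary \ref{brun-weak} (applied with $\e=\tfrac14$) gives $\pi(x;p,r)\ll x/(\phi(p)\log x)\ll x/(p\log x)$ for every root $r$, so $\varsigma(p)=\sum_{r\in\varrho(p)}\pi(x;p,r)\le\rho(p)\max_r\pi(x;p,r)\ll x/(p\log x)$. Using $\log p\le\log x$ this yields
$$\sum_{\xx\le p\le x^{1/2}}\varsigma(p)\log p\ll x\sum_{\xx\le p\le x^{1/2}}\frac1p .$$
Finally I would apply Mertens' theorem: $\sum_{\xx\le p\le x^{1/2}}1/p=\log\log(x^{1/2})-\log\log\xx+O(1/\log x)$; since $\log\log(x^{1/2})=\log\log x-\log2+O(1/\log x)$ while $\log\log\xx=\log\bigl(\tfrac12\log x-B\log\log x\bigr)=\log\log x-\log2-2B\log\log x/\log x+O\bigl((\log\log x/\log x)^2\bigr)$, the difference is $\ll\log\log x/\log x$. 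Combining the two parts gives $\log Q_M(x)\ll x\log\log x/\log x$.

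I do not anticipate a serious obstacle. The only points needing a little care are verifying that the $n\ge2$ terms are genuinely negligible for the chosen size of $\xx$, and carrying out the Mertens estimate so that the small quantity $\sum_{\xx\le p\le x^{1/2}}1/p\ll\log\log x/\log x$ is extracted cleanly (rather than the trivial bound $O(1)$, which would only give $\log Q_M(x)\ll x$). One could instead use the Brun--Titchmarsh inequality in the sharper form of Theorem \ref{theta-bound-vanilla} with $\theta=\log p/\log x\to\tfrac12$, but the crude Corollary \ref{brun-weak} already suffices for the stated bound.
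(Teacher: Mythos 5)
Your proposal is correct and follows essentially the same route as the paper: the paper simply invokes Lemma \ref{alpha-estimate} (whose content for $p\ge\xx$ is exactly your bound $\alpha_p(x)\ll x/(p\log x)+(\log x)^{2B}/\log p$, obtained the same way via Corollary \ref{brun-weak} and trivial bounds on higher prime powers) and then applies Mertens' first theorem to $\sum_{\xx\le p\le x^{1/2}}\log p/p\ll\log\log x$ over the thin window. Your only deviations --- re-deriving the lemma inline and using $\log p\le\log x$ together with Mertens' second theorem for $\sum 1/p$ instead --- are cosmetic.
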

This means we can just remove medium-sized primes from $\log Q(x)$ and only lose a sublinear portion.
The proof is a simple computation using Lemma \ref{alpha-estimate}.
\begin{proof}[Proof of Proposition \ref{medium-prime-estimate}]
    From Lemma \ref{alpha-estimate}, it follows that
    \begin{align*}
        \log Q_M(x) &= \sum_{\xx\le p \le x^{1/2}} \alpha_p(x) \log p\\ 
                    &\ll \sum_{\xx\le p \le x^{1/2}} \left(\frac{x}{p\log x} + \frac{(\log x)^{2B}}{\log p}\right) \log p\\
                    &= \frac{x}{\log x}\sum_{\xx\le p \le x^{1/2}} \frac{\log p}{p} + \O(x^{1/2}(\log x)^{2B})\\
                    &\ll \frac{x\log\log x}{\log x},
    \end{align*}
    as desired.
\end{proof}

\subsection{Bounding large primes}
Define the product $$Q_L(x)=\prod_{x^{1/2}<p<\xdelta}p^{\alpha_p(x)},$$
the part of $Q(x)$ consisting of large primes. The main result of this section is the following. 
\begin{proposition}\label{large-prime-estimate}
    $\log Q_L(x) \le (1+o(1))x \int_{1/2}^{\delta} C(\theta) ~\mathrm{d}\theta$.
\end{proposition}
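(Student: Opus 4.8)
The plan is to bound $\log Q_L(x)=\sum_{x^{1/2}<p<\xdelta}\alpha_p(x)\log p$ one prime at a time using the Brun--Titchmarsh theorem with its $\theta$-dependent constant, and then to sum the result against Lemma \ref{merten}. First I would reduce $\alpha_p(x)$ to $\varsigma(p)$. For $x$ large, every prime $p>x^{1/2}$ fails to divide $\disc f$, so $\rho(p^{n})=\rho(p)\le d$ for all $n$ by Hensel's lemma and Lagrange's theorem; moreover $p^{n}>x$ for $n\ge 2$, so each residue class modulo $p^{n}$ meets $[1,x)$ at most once and hence $\varsigma(p^{n})\le\rho(p^{n})\le d$. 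Since $p^{n}$ can divide some $|f(q)|\ll x^{d}$ only for $n\le 2d+o(1)$, this gives $\alpha_p(x)=\sum_{n\ge1}\varsigma(p^{n})=\varsigma(p)+\O(1)$, and as there are $\ll x^{\delta}/\log x$ primes below $\xdelta$ we obtain
$$\log Q_L(x)=\sum_{x^{1/2}<p<\xdelta}\varsigma(p)\log p+\O\!\left(x^{\delta}\right).$$

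Next, writing a prime $p$ in this range as $p=x^{\theta_p}$ with $\theta_p=\tfrac{\log p}{\log x}\in(\tfrac12,\delta)$ and applying the relevant Brun--Titchmarsh-type bound (Theorem \ref{theta-bound-vanilla}, sharpened by Theorem \ref{theta-bound-iwaniec} in its range) to each of the $\rho(p)$ residues $r\in\varrho(p)$, one gets $\varsigma(p)=\sum_{r}\pi(x;p,r)\le\rho(p)\bigl(C(\theta_p)+o(1)\bigr)\tfrac{x}{(p-1)\log x}$, the $o(1)$ being uniform for $\theta_p$ in the compact interval $[\tfrac12,\delta]\subset(0,1)$. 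Using $\log p=\theta_p\log x$ this turns the last display into
$$\log Q_L(x)\le\frac{x}{\log x}\sum_{x^{1/2}<p<\xdelta}\bigl(C(\theta_p)+o(1)\bigr)\frac{\rho(p)\log p}{p-1}+\O\!\left(x^{\delta}\right).$$
The final step is a partial summation. Set $S(t)=\sum_{p\le t}\tfrac{\rho(p)\log p}{p-1}$; Lemma \ref{merten} gives $S(t)=\log t+R+o(1)$, so $S(t)-\log t$ is bounded, and the $o(1)$ term above contributes at most $o(1)\cdot S(\xdelta)=o(\log x)$. For the main term I would regard $\sum_{x^{1/2}<p<\xdelta}C(\theta_p)\tfrac{\rho(p)\log p}{p-1}$ as the Riemann--Stieltjes integral $\int_{x^{1/2}}^{\xdelta}C\!\left(\tfrac{\log t}{\log x}\right)\mathrm{d}S(t)$, replace $\mathrm{d}S(t)$ by $\mathrm{d}(\log t)$, and integrate by parts: the error is at most $\sup_{t}|S(t)-\log t|$ times the total variation of $\theta\mapsto C(\theta)$ on $[\tfrac12,\delta]$, which is $\O(1)$ because $C$ is a bounded piecewise-rational function there (it blows up only as $\theta\to1$). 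The substitution $\theta=\tfrac{\log t}{\log x}$ turns $\int_{x^{1/2}}^{\xdelta}C(\tfrac{\log t}{\log x})\,\mathrm{d}(\log t)$ into $\log x\int_{1/2}^{\delta}C(\theta)\,\mathrm{d}\theta$; assembling the pieces and using $x^{\delta}=o(x)$ and $\int_{1/2}^{\delta}C(\theta)\,\mathrm{d}\theta>0$ to absorb lower-order terms gives $\log Q_L(x)\le(1+o(1))\,x\int_{1/2}^{\delta}C(\theta)\,\mathrm{d}\theta$.

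The main obstacle is the bookkeeping in this last step — keeping the passage from $\mathrm{d}S$ to $\mathrm{d}(\log t)$ honest and converting the discrete sum into the integral with a genuinely uniform error, and likewise checking that the $o(1)$ coming out of Brun--Titchmarsh is uniform in $\theta$ (which is automatic from the clean inequality $\pi(x;m,a)\le 2x/(\phi(m)\log(x/m))$). If $f$ is quadratic I would instead feed the Quadratic Brun--Titchmarsh theorem (Theorem \ref{qbrun}), which already has the shape $\varsigma(m)\le(C(\theta)+o(1))\rho(m)\tfrac{x}{\phi(m)\log x}$, into the same partial summation; the $\O_A(L/(\log L)^{A})$ exceptional moduli in each dyadic block $[L,2L]$ are handled by the trivial bound $\varsigma(p)\ll x/p$, contributing only $\ll x/(\log x)^{A-2}=o(x)$ in total.
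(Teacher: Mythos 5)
Your proposal is correct and follows essentially the same route as the paper: reduce $\alpha_p(x)$ to $\varsigma(p)+\O(1)$ using $p^2>x$, bound $\varsigma(p)$ by the $\theta$-dependent Brun--Titchmarsh constants (Theorems \ref{theta-bound-vanilla}, \ref{theta-bound-iwaniec}, or \ref{qbrun} for $d=2$), and convert the resulting sum into $x\int_{1/2}^{\delta}C(\theta)\,\mathrm{d}\theta$ via Lemma \ref{merten} and partial summation. Your explicit handling of the partial-summation error and of the exceptional moduli in the quadratic case simply fills in steps the paper dispatches as ``standard techniques'' and ``can be verified.''
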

The proof uses the Brun-Titchmarsh theorem (Theorem \ref{theta-bound-vanilla} and \ref{theta-bound-iwaniec}) and involves standard procedures to convert sums over primes to integrals.
\begin{proof}[Proof of Proposition \ref{large-prime-estimate}]
    Let $p$ be a prime in $(x^{1/2},\xdelta).$ Similar to the proof of Lemma \ref{alpha-estimate}, we have
    \begin{align*}
        \alpha_p(x) &= \sum_{n=1}^{\infty}\varsigma(p^n)= \varsigma(p) + \O(\log x/\log p)= \varsigma(p) + \O(1)
    \end{align*}
    as $p^2>x.$
    Therefore,
    \begin{align*}
        \log Q_L(x) &= \sum_{x^{1/2}<p<\xdelta} \alpha_p(x)\log p\\
                    &= \sum_{x^{1/2}<p<\xdelta} \varsigma(p)\log p + O(\xdelta).
    \end{align*}
    By Theorem \ref{theta-bound-vanilla}, \ref{theta-bound-iwaniec} and Lemma \ref{merten}, we have
    \begin{align*}
        \sum_{x^{1/2}<p<\xdelta} \varsigma(p)\log p &\le \sum_{x^{1/2}<p<\xdelta} \frac{(C(\theta)+o(1))x}{\phi(p)\log x}\rho(p)\log p\\
                                                    &= \frac{x}{\log x}\sum_{x^{1/2}<p<\xdelta} \frac{C(\theta)+o(1)}{\phi(p)}\rho(p)\log p\\
                                                    &= \frac{x}{\log x}\sum_{x^{1/2}<p<\xdelta} C(\theta)\frac{\rho(p)\log p }{p-1} + o\left(\frac{x\log\log x}{\log x}\right).\\
    \end{align*}
    It can be verified that the above inequality is true even when $f$ is an irreducible quadratic polynomial and we apply Theorem \ref{qbrun} instead of Theorem \ref{theta-bound-iwaniec}.
    By standard techniques to convert sums over primes into integrals, we have
    $$ \sum_{x^{1/2}<p<\xdelta} \varsigma(p)\log p \le (1+o(1))x\int_{1/2}^{\delta}C(\theta)~\mathrm{d}\theta,$$
    proving the lemma. 
\end{proof}

\subsection{The main bound}

It is easy to see that 
$$\log Q(x) = \sum_{p<x}(d\log p + \O(1))=dx+\O(x/\log x).$$
Define 
$$Q_{VL}(x)=\prod_{p\ge \xdelta}p^{\alpha_p(x)},$$
the part of $Q(x)$ consisting of primes at least $\xdelta$ (\textit{very large} primes).
Using Propositions \ref{small-prime-estimate}, \ref{medium-prime-estimate}, and \ref{large-prime-estimate}, we obtain
$$\log Q_{VL}(x) = \log \frac{Q(x)}{Q_S(x)Q_M(x)Q_L(x)}\ge \left(d-\frac 12 -\int_{1/2}^{\delta}C(\theta)~\mathrm{d}\theta+o(1)\right)x. $$

\begin{proposition}\label{main-bound}
    $\log Q_{VL}(x) \ge \left(d-\frac 12-\int_{1/2}^{\delta}C(\theta)~\mathrm{d}\theta+o(1)\right)x.$
\end{proposition}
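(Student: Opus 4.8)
\emph{Proposal.} The plan is to read the bound straight off the multiplicative decomposition of $Q(x)$, feeding in the three estimates already established in this section. First I would record the elementary fact that $\log Q(x) = dx + \O(x/\log x)$: since $f$ has degree $d$, each prime $q<x$ contributes $\log|f(q)| = d\log q + \O(1)$ to $\log Q(x) = \sum_{q<x}\log|f(q)|$, and summing over $q<x$ via the Prime Number Theorem (that is, $\sum_{q<x}\log q = x+\O(x/\log x)$ together with $\pi(x) = \O(x/\log x)$) gives the claim.

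Next I would note that the four intervals $[1,\xx)$, $[\xx, x^{1/2}]$, $(x^{1/2},\xdelta)$, and $[\xdelta,\infty)$ partition the positive reals, so that every prime lies in exactly one of them; since $Q(x) = \prod_p p^{\alpha_p(x)}$, this makes the factorisation
$$Q(x) = Q_S(x)\,Q_M(x)\,Q_L(x)\,Q_{VL}(x)$$
an identity, and taking logarithms gives
$$\log Q_{VL}(x) = \log Q(x) - \log Q_S(x) - \log Q_M(x) - \log Q_L(x).$$

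Into the right-hand side I would substitute: Proposition \ref{small-prime-estimate}, which (since $B>0$) gives $\log Q_S(x) \le \tfrac x2 + \O(x/\log x)$, the term $-\tfrac{Bx\log\log x}{\log x}$ having a favourable sign and being simply dropped; Proposition \ref{medium-prime-estimate}, which gives $0 \le \log Q_M(x) \ll \tfrac{x\log\log x}{\log x}$; and Proposition \ref{large-prime-estimate}, which gives $0 \le \log Q_L(x) \le (1+o(1))x\int_{1/2}^{\delta}C(\theta)\,\mathrm{d}\theta$. As each of the discarded and error terms is $o(x)$, collecting everything yields
$$\log Q_{VL}(x) \ge \left(d - \tfrac12 - \int_{1/2}^{\delta}C(\theta)\,\mathrm{d}\theta + o(1)\right)x,$$
which is the assertion.

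There is no substantial obstacle here: this proposition is pure bookkeeping sitting on top of the three hard estimates of the section. The only points warranting a moment's care are that the stated ranges genuinely partition the primes (so no prime is double-counted or omitted in the decomposition), that the signs line up so that each term we drop weakens rather than strengthens the lower bound, and that all error terms are $o(x)$ uniformly over the relevant ranges — each of which is immediate from the statements of the three propositions together with the Prime Number Theorem.
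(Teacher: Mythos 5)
Your proposal is correct and matches the paper's own argument, which likewise derives the bound by writing $\log Q_{VL}(x)=\log Q(x)-\log Q_S(x)-\log Q_M(x)-\log Q_L(x)$ with $\log Q(x)=\sum_{q<x}(d\log q+\O(1))=dx+\O(x/\log x)$ and then inserting Propositions \ref{small-prime-estimate}, \ref{medium-prime-estimate}, and \ref{large-prime-estimate}. Your extra remarks on the partition of the primes, the sign of the $-\tfrac{Bx\log\log x}{\log x}$ term, and the $o(x)$ errors are exactly the bookkeeping the paper leaves implicit.
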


\subsection{Bounding the integral}
The strategy will be to make $\delta$ as large as possible while keeping Proposition \ref{main-bound} non-trivial. Thanks to Theorem \ref{theta-bound-vanilla} and \ref{theta-bound-iwaniec}, we are able to bound the integral effortlessly. 
For $d \ge 2,$ 
\begin{align*}
    \int_{1/2}^{\delta}C(\theta)~\mathrm{d}\theta&=\int_{1/2}^{2/3}C(\theta)~\mathrm{d}\theta+\int_{2/3}^{\delta}C(\theta)~\mathrm{d}\theta\\
                                                 &< \int_{1/2}^{2/3}\frac{8}{6-7\theta}~\mathrm{d}\theta+\int_{2/3}^{\delta}\frac{2}{1-\theta}~\mathrm{d}\theta\\
                                                 &< -1.4788 -2\log(1-\delta).
\end{align*}
The case $d=1$ is a little special because we cannot make $\delta$ greater than $2/3.$ For $d=1,$
\begin{align*}
    \int_{1/2}^{\delta}C(\theta)~\mathrm{d}\theta < \int_{1/2}^{\delta}\frac{8}{6-7\theta}~\mathrm{d}\theta < 1.0472 - \frac 87\log(6 - 7\delta).
\end{align*}
\subsection{Choosing $\delta$} To preserve the linear lower bound in Proposition \ref{main-bound}, we want to have
$$d-\frac 12 \ge -1.4788 -2\log(1-\delta)$$
if $d\ge 2.$
This reduces to $\delta\le 1 - \exp\left(\frac{-d-0.9788}{2}\right).$ And for $d=1,$
$$1-\frac 12 \ge 1.0472 - \frac 87\log(6 - 7\delta)\implies \delta \le 0.62656.$$
However, we can do a lot better for $d=2,$ thanks to Theorem \ref{qbrun}. 
The following numerical computation, also performed in \cite{uwu}, shows that
\begin{align*}
    \int_{1/2}^{\delta}C(\theta)~\mathrm{d}\theta \le \int_{\frac 12}^{\frac{64}{97}}\frac{124}{91-89\theta}~\mathrm{d}\theta+\int_{\frac{64}{97}}^{\frac{32}{41}}\frac{120}{86-83\theta}~\mathrm{d}\theta +\int_{\frac{32}{41}}^{\delta}\frac{28}{19-18\theta}~\mathrm{d}\theta <\frac 32
\end{align*}
with $\delta=0.847.$
Thus, we set $\delta= 1 - \varepsilon(d)$ for the rest of the argument, where $\varepsilon(1)=0.3735,~\varepsilon(2)=0.153,$ and $\varepsilon(d)=\exp\left(\frac{-d-0.9788}{2}\right)$ for $d\ge 3$.

\subsection{Finishing the argument}
Define $L(x)=\lcm\{f(p)\mid p<x\}.$ Let $p$ be a prime such that $p\ge x^\delta.$ Note that the exponent of $p$ in $Q(x)$ is $\O( x^{1-\delta}).$ We know that $\log Q_{VL}(x) \gg x$. Therefore,
$$x\ll \log Q_{VL}(x)\ll x^{1-\delta}\sum_{\substack{p\ge\xdelta\\ p\mid Q(x)}} \log p.$$
Thus,
$$\log L(x) > \sum_{\substack{p\ge\xdelta\\ p\mid Q(x)}} \log p\gg \xdelta,$$
as desired.

\begin{remark}
    It is worth noting that the same method gives $\log \rad \lcm \{f(p) \mid p<x\}\gg x^{1-\varepsilon(d)}$, similar to that obtained by Sah in \cite{sah}.
\end{remark}

\section{Digression on the greatest prime divisor of $f(p)$}
The main ingredient in proving Theorem \ref{density-result} is Proposition \ref{main-bound}, which provides us a good handle on large primes dividing $Q(x).$ 
\begin{proof}[Proof of Theorem \ref{density-result}]
    By Proposition \ref{main-bound}, 
    $$\log Q_{VL}(x)=\sum_{q<x}\sum_{\substack{p>\xdelta\\ p\mid f(q)}}\log p\gg x.$$
    Set $\delta=1-\varepsilon(d).$ Let the number of primes $p$ less than $x$ such that $f(p)$ has a prime divisor greater than $x^{\delta}$ be $N(x).$ Note that if $p\mid Q(x),$ then $p< x^{d+1}$ for all large $x.$ Thus,
\begin{align*}
    N(x) \gg \sum_{q<x}\sum_{\substack{p>\xdelta\\ p\mid f(q)}} 1 \gg\sum_{q<x}\sum_{\substack{p>\xdelta\\ p\mid f(q)}} \frac{\log p}{\log x^{d+1}} \gg \frac{1}{\log x}\sum_{q<x}\sum_{\substack{p>\xdelta\\ p\mid f(q)}} \log p \gg \frac{x}{\log x},
    \end{align*}
   which completes the proof. 
\end{proof}

\begin{remark}
    It can be seen that the Elliott–Halberstam conjecture allows us to take $\varepsilon(d)$ to be any positive constant. For completeness, a formulation of the Elliott-Halberstam conjecture is as follows:

\vspace{0.2cm}

\noindent\textbf{Elliott-Halberstam Conjecture.}\textit{
    Define the error function 
    $$E(x;q)=\max_{\gcd(a,q)=1} \left|\pi(x;q,a)-\frac{\pi(x)}{\varphi(q)}\right|,$$
    where the $\max$ is taken over all $a$ relatively prime to $q.$ For every $\theta<1$ and $A>0,$ we have
    $$\sum_{1\le q\le x^{\theta}} E(x;q) \ll_{\theta,A} \frac{x}{\log^A x}.$$
}
\end{remark}

We end the article with the following question for readers.
\begin{question}\label{main-conjecture}
    Let $f$ be an irreducible integer polynomial. Is it true that 
    $\log \lcm \{f(p)\mid p<x\} \gg x?$
\end{question}

\bibliographystyle{amsplain}

\end{document}